\newtheorem{lemma}{Lemma}[section]
\newtheorem{theorem}[lemma]{Theorem}
\theoremstyle{remark}
\newtheorem{remark}[lemma]{Remark}
\newcommand{\Norm}[1]{|\!|\!|#1|\!|\!|} 
\newcommand{\norm}[1]{\|#1\|}
\newcommand{\cQ}{\mathcal{Q}}
\newcommand{\cP}{\mathcal{P}}
\newcommand{\cS}{\mathcal{S}}
\newcommand{\dd}{\rho}
\renewcommand{\d}{d}
\newcommand{\X}{\mathbf{X}}
\newcommand{\ccdot}{\;\cdot\;} 
\renewcommand{\rho}{\orho}
 \newtheorem{assumption}{Assumption}
\title[Yet another look at Harris' ergodic theorem for Markov chains]{Yet another look at Harris' ergodic theorem\\ for Markov chains}
\author{Martin~Hairer}
\address{Mathematics Institute, The University of Warwick, CV4
  7AL, UK}
\email{M.Hairer@Warwick.ac.uk}
\author[Jonathan Mattingly]{Jonathan~C.~Mattingly}
\address{Mathematics Department, Center of Theoretical and
  Mathematical Science, Department of Statistical Science, and Center
  of Nonlinear and Complex Systems, Duke University, Durham NC, USA}
\email{jonm@math.duke.edu}
\begin{document}
\maketitle

The aim of this note is to present an elementary proof of a variation
of Harris' ergodic theorem of Markov chains. This theorem, dating back
to the fifties \cite{HarrisESMMP56} essentially states that a Markov
chain is uniquely ergodic if it admits a ``small'' set (in a technical
sense to be made precise below) which is visited infinitely
often. This gives an extension of the ideas of Doeblin to the
unbounded state space setting. Often this is established by finding a
Lyapunov function with ``small'' level sets
\cite{Hasminskii80,MeynTweedieMC93}. If the Lyapunov function is
strong enough, one has a spectral gap in a weighted supremum norm
\cite{MeyTwe92:542,MeynTweedieMC93}. In particular, its transition
probabilities converge exponentially fast towards the unique invariant
measure, and the constant in front of the exponential rate is
controlled by the Lyapunov function \cite{MeynTweedieMC93}.

Traditional proofs of this result rely on the decomposition of the
Markov chain into excursions away from the small set and a careful
analysis of the exponential tail of the length of these excursions
\cite{Num84,Cha89:702,MeyTwe92:542,MeynTweedieMC93}. There have been other variations
which have made use of Poisson equations or worked at getting explicit
constants \cite{kontoyiannis_meyn_2005,douc_moulines_rosenthal_2004,del_moral_ledoux_miclo_2003}. The
present proof is very direct, and relies instead on introducing a
family of equivalent weighted norms indexed by a parameter $\beta$ and
to make an appropriate choice of this parameter that allows to combine
in a very elementary way the two ingredients (existence of a Lyapunov
function and irreducibility) that are crucial in obtaining a spectral
gap. Use of a weighted total-variation norm has been important since
\cite{MeyTwe92:542}.

The original motivation of this proof was the authors' work on spectral
gaps in Wasserstein metrics. The proof presented in this note is a
version of our reasoning in the total variation setting which we used
to guide the calculations in \cite{HaiMat08:??}.  While we initially
produced it for this purpose, we hope that it will be of interest in
its own right. 

\section{Setting and main result}

Throughout this note, we fix a measurable space $\X$ and a Markov transition kernel $\cP(x,\cdot)$ on $\X$.
We will use the notation $\cP$ for the operators defined as usual
on both the set of bounded measurable functions and the set of 
measures of finite mass by
\begin{equ}
\bigl(\cP \phi\bigr)(x) = \int_\X \phi(y)\,\cP(x,dy)\;,\quad \bigl(\cP \mu\bigr)(A) = \int_\X \cP(x,A)\,\mu(dx)\;.
\end{equ}
Hence we are using $\cP$ both to denote the action on functions and its duel action on measure. 
Note that $\cP$ extends trivially to measurable functions $\phi \colon \X \to [0,+\infty]$.
We first assume that $\cP$ satisfies the following geometric drift condition:

\begin{assumption}\label{ass:Harris2}
  There exists a function $V:\X \rightarrow [0,\infty)$ and constants $K\geq 0$ and
  $\gamma \in (0,1)$ such that
 \begin{equ}[e:Lyap]
   (\cP V)(x) \leq \gamma V(x) +K\;,
 \end{equ}
for all $x \in \X$.
\end{assumption}

\begin{remark}
  One could allow $V$ to also take the value $+\infty$. However, since
  we do not assume any particular structure on $\X$, this case can
  immediately be reduced to the present case by replacing $\X$ by
  $\{x\,:\, V(x) < \infty\}$.
\end{remark}
Assumption~\ref{ass:Harris2} ensures that the dynamics enters the
``center'' of the state space regularly with tight control on the
length of the excursions from the center.  We now assume that a
sufficiently large level set of $V$ is sufficiently ``nice'' in the
sense that we have a uniform ``minorization'' condition reminiscent of
Doeblin's condition, but localized to the interior of the level set.

\begin{assumption}\label{ass:Harris1}
  There exists a constant $\alpha
  \in(0,1)$   and a probability measure $\nu$ so that
  \begin{align*}
    \inf_{x \in \mathcal{C}} \cP(x,\ccdot) \geq \alpha \nu(\ccdot)\;,
  \end{align*}
  with $\mathcal{C} = \{ x \in \X : V(x) \leq R\}$ for some $R >
  2K/(1-\gamma)$ where $K$ and $\gamma$ are the constants from
  Assumption~\ref{ass:Harris2}.
\end{assumption}

In order to state the version Harris' theorem under consideration, we
introduce the following weighted supremum norm:
\begin{equ}[e:normphi]
  \norm{\phi}= \sup_{ x} \frac{|\phi(x)|}{1 + V(x)}\;.
\end{equ}
With this notation at hand, one has:

\begin{theorem}\label{theo:Harris}
If Assumptions~\ref{ass:Harris2} and
 \ref{ass:Harris1} hold,  then $\cP$ admits a unique invariant measure $\mu_\star$.
Furthermore,  there exist constants $C>0$ and $\gamma \in (0,1)$ such that the bound
\begin{equ}
\norm{\cP^n\phi - \mu_\star(\phi)} \le C \gamma^n \norm{\phi - \mu_\star(\phi)}
\end{equ}
holds for every measurable function $\phi\colon \X\to\R$ such that $\norm{\phi}<\infty$.
\end{theorem}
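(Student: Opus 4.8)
The plan is to work with a one-parameter family of weighted norms and their dual. For $\beta>0$ set $\norm{\phi}_\beta = \sup_{x} |\phi(x)|/(1+\beta V(x))$ on functions, and introduce the distance $\varrho_\beta(x,y) = 2 + \beta V(x) + \beta V(y)$ for $x\ne y$, with $\varrho_\beta(x,x)=0$, on $\X$. One checks directly that $\varrho_\beta$ is a metric, so by Kantorovich--Rubinstein duality the associated Wasserstein distance $\norm{\mu_1-\mu_2}_\beta := \inf_\pi \int \varrho_\beta\,d\pi$ (infimum over couplings $\pi$ of $\mu_1,\mu_2$) is dual to the Lipschitz seminorm with respect to $\varrho_\beta$; moreover the elementary bound $|\phi(x)-\phi(y)| \le \norm{\phi}_\beta\,\varrho_\beta(x,y)$ shows that $\norm{\phi}_\beta$ controls this Lipschitz constant. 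The entire theorem reduces to establishing a single contraction: there exist $\beta>0$ and $\bar\gamma\in(0,1)$ such that $\norm{\cP\mu_1-\cP\mu_2}_\beta \le \bar\gamma\,\norm{\mu_1-\mu_2}_\beta$ for all probability measures $\mu_1,\mu_2$.

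By convexity of the Wasserstein distance it suffices to prove the pointwise version $\norm{\cP(x,\ccdot)-\cP(y,\ccdot)}_\beta \le \bar\gamma\,\varrho_\beta(x,y)$ for every $x\ne y$, and this is the heart of the argument. I would split into the two \emph{complementary} cases determined by $\cC=\{V\le R\}$. When both points lie in $\cC$, I invoke Assumption~\ref{ass:Harris1}: writing $\cP(x,\ccdot)=\alpha\nu + (1-\alpha)\tilde\mu_x$ and likewise for $y$, the coupling that places mass $\alpha$ on the diagonal through $\nu$ annihilates the constant $2$ up to a factor $(1-\alpha)$, and feeding the remainder through Assumption~\ref{ass:Harris2} yields the bound $2(1-\alpha) + 2\beta K + \beta\gamma(V(x)+V(y))$. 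When at least one point lies outside $\cC$, so that $V(x)+V(y)>R$, I use only the drift condition: the product coupling together with Assumption~\ref{ass:Harris2} gives $2 + 2\beta K + \beta\gamma(V(x)+V(y))$.

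The crux is then to compare each bound with $\varrho_\beta(x,y)=2+\beta(V(x)+V(y))$ and to choose $\beta,\bar\gamma$ so that both cases deliver the same rate $\bar\gamma<1$. Since each bound is affine and decreasing in $s:=V(x)+V(y)$ relative to $\varrho_\beta$, the drift case is worst at $s=R$, where the required inequality reduces precisely to $2K < R(1-\gamma)$, i.e.\ to the hypothesis $R>2K/(1-\gamma)$; this makes the drift rate strictly below $1$ for \emph{any} $\beta>0$. The minorization case is worst as $s\to 0$, where it forces $\bar\gamma \ge 1-\alpha+\beta K$, which lies below $1$ as soon as $\beta < \alpha/K$. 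Thus I would fix any $\beta\in(0,\alpha/K)$ and take $\bar\gamma$ equal to the maximum of the two resulting rates. I expect this simultaneous balancing to be the main obstacle: each one-sided estimate degenerates at the opposite end of its range, and it is only the interplay of the threshold $R$ with a sufficiently small $\beta$ that reconciles them into a uniform contraction.

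Given the contraction the remainder is routine. Since $\varrho_\beta \ge 2\,\mathbf{1}_{x\ne y}$, the space of probability measures with finite $V$-moment is complete under $\norm{\cdot}_\beta$, so the $\bar\gamma$-contraction $\cP$ has a unique fixed point $\mu_\star$ by the Banach fixed point theorem, giving existence and uniqueness of the invariant measure (with $\mu_\star(V)\le K/(1-\gamma)$ from the drift bound). For the rate I would write $(\cP^n\phi)(x)-\mu_\star(\phi) = \int \phi\,d(\cP^n\delta_x - \cP^n\mu_\star)$ using invariance of $\mu_\star$, bound it via duality using $\mathrm{Lip}_{\varrho_\beta}(\phi)=\mathrm{Lip}_{\varrho_\beta}(\phi-\mu_\star(\phi)) \le \norm{\phi-\mu_\star(\phi)}_\beta$ and $\norm{\cP^n\delta_x-\cP^n\mu_\star}_\beta \le \bar\gamma^n\norm{\delta_x-\mu_\star}_\beta \le C\bar\gamma^n(1+V(x))$, then divide by $1+V(x)$ and take the supremum. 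Finally, since $\norm{\cdot}_\beta$ and $\norm{\cdot}$ are equivalent up to constants depending on $\beta$, this yields $\norm{\cP^n\phi-\mu_\star(\phi)} \le C\bar\gamma^n\norm{\phi-\mu_\star(\phi)}$, which is the claimed bound with $\gamma$ renamed to $\bar\gamma$.
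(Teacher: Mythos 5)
Your proposal is correct and is essentially the paper's argument: the same one-parameter family of norms $\norm{\ccdot}_\beta$, the same metric $2+\beta V(x)+\beta V(y)$, the same reduction to pairs of points split into $V(x)+V(y)\ge R$ (drift only) versus $x,y\in\cC$ (minorization via the residual kernel), the same balancing of $\beta<\alpha/K$ against $R>2K/(1-\gamma)$ yielding identical rates, and the same completeness/fixed-point conclusion for existence and uniqueness. The only difference is presentational: you run the contraction on the measure side with explicit couplings (mass $\alpha$ on the diagonal through $\nu$ plus a product coupling of the remainders), whereas the paper performs the identical computation on Lipschitz test functions, using its Lemma~\ref{lem:reduction} where you invoke Kantorovich--Rubinstein duality --- of which, note, your argument only ever uses the elementary direction.
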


While this result is well-known, the proofs found in the literature are often quite involved
and rely on careful  estimates of the return times to small sets, combined with a clever application
of Kendall's lemma. See for example \cite[Section~15]{MeynTweedieMC93}.

The aim of this note is to provide a very short and elementary proof of
Theorem~\ref{theo:Harris} based on a simple trick.
Instead of working directly with \eref{e:normphi},  we define a whole family of weighted
supremum norms depending on a scale parameter $\beta >
0$ that are all equivalent to the original norm \eref{e:normphi}:
\begin{align*}
  \norm{\phi}_{\beta}= \sup_{ x} \frac{|\phi(x)|}{1 + \beta V(x)}\;.
\end{align*}
We also define the associated dual metric $\dd_\beta$ on probability measures given by
\begin{align}\label{eq:dd_def}
  \dd_{\beta}(\mu_1,\mu_2) &= \sup_{\phi : \norm{\phi}_\beta \leq 1} \int_\X
  \phi(x) \bigl(\mu_1- \mu_2\bigr)(dx)\;.
\end{align}
It is well-known that $\dd_\beta$ is nothing but a weighted total variation distance:
\begin{equ}
  \dd_{\beta}(\mu_1,\mu_2) = \int_\X \bigl(1+\beta V(x)\bigr)\,|\mu_1 - \mu_2|(dx)\;.
\end{equ}
With these notations, our main result is:

\begin{theorem}\label{thm:main} If Assumptions~\ref{ass:Harris2} and
 \ref{ass:Harris1} hold,  then there exists $\bar \alpha \in (0,1)$ and
  $\beta >0$ so that
\begin{equs}
    \dd_\beta( \cP \mu_1 , \cP \mu_2) &\leq \bar \alpha
    \dd_\beta(\mu_1,\mu_2)
\end{equs}
  for any probability measure $\mu_1$ and $\mu_2$ on $\X$. In
  particular, for any $\alpha_0 \in (0,\alpha)$ and $\gamma_0 \in
  (\gamma + 2K/R, 1)$ one can choose $\beta =\alpha_0/K$ and $\bar
  \alpha= (1- (\alpha-\alpha_0)) \vee (2+ R \beta\gamma_0)/(2+R
  \beta)$.
\end{theorem}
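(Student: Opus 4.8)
The plan is to pass to the dual description on functions and reduce the contraction of $\dd_\beta$ to a single two-point oscillation estimate. Since $\mu_1-\mu_2$ has zero total mass for probability measures, for any constant $c$ one has, using the definition \eref{eq:dd_def} of $\dd_\beta$ as the dual norm of $\norm{\,\cdot\,}_\beta$,
\[
  \int_\X \cP\phi\,d(\mu_1-\mu_2) = \int_\X(\cP\phi - c)\,d(\mu_1-\mu_2) \le \norm{\cP\phi - c}_\beta\,\dd_\beta(\mu_1,\mu_2).
\]
Taking the supremum over $\norm{\phi}_\beta\le 1$ and then the infimum over $c$, the theorem reduces to showing that $\inf_c \norm{\cP\phi - c}_\beta \le \bar\alpha$ whenever $\norm{\phi}_\beta\le 1$. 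A short argument shows that such a $c$ exists precisely when the oscillation bound
\[
  |(\cP\phi)(x) - (\cP\phi)(y)| \le \bar\alpha\bigl(2 + \beta V(x) + \beta V(y)\bigr) \qquad\text{for all } x,y\in\X
\]
holds, so the whole statement reduces to this single inequality for every $\phi$ with $|\phi(z)|\le 1+\beta V(z)$.

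The second step is to produce two complementary bounds on this oscillation. The crude one, valid for all $x,y$, follows from the triangle inequality together with the drift \eref{e:Lyap}: since $|(\cP\phi)(x)|\le \bigl(\cP(1+\beta V)\bigr)(x)\le 1+\beta(\gamma V(x)+K)$, one obtains
\[
  |(\cP\phi)(x) - (\cP\phi)(y)| \le 2 + 2\beta K + \beta\gamma\bigl(V(x)+V(y)\bigr).
\]
The refined one exploits the minorization of Assumption~\ref{ass:Harris1}: for $x\in\mathcal{C}$ I would write $\cP(x,\ccdot)=\alpha\nu(\ccdot)+(1-\alpha)\tilde Q(x,\ccdot)$ with $\tilde Q(x,\ccdot)$ a probability measure, so that when both $x,y\in\mathcal{C}$ the common $\alpha\nu$ contributions cancel and only the factor $(1-\alpha)$ survives; estimating $(1-\alpha)\int_\X V\,d\tilde Q(x,\ccdot) = (\cP V)(x)-\alpha\nu(V)\le (\cP V)(x)$ and invoking the drift once more yields
\[
  |(\cP\phi)(x) - (\cP\phi)(y)| \le 2(1-\alpha) + 2\beta K + \beta\gamma\bigl(V(x)+V(y)\bigr).
\]
The minorization thus buys a saving of $2\alpha$ on the constant term, which is exactly what is needed to beat $1$ near the center of the state space.

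The final step is to combine these with the choice $\beta=\alpha_0/K$, so that $2\beta K=2\alpha_0$, splitting on $s:=V(x)+V(y)$. When $s\ge R$ the crude bound gives the ratio $(2+2\alpha_0+\beta\gamma s)/(2+\beta s)$, which is decreasing in $s$ and hence maximal at $s=R$; the condition $\gamma_0>\gamma+2K/R$ is precisely what makes $2+2\alpha_0+\beta\gamma R\le 2+\beta\gamma_0 R$, so that this ratio is dominated by $(2+R\beta\gamma_0)/(2+R\beta)$. When $s<R$ both points lie in $\mathcal{C}$ and the refined bound applies, giving an affine-in-$s$ ratio equal to $1-(\alpha-\alpha_0)$ at $s=0$ and, at the other endpoint, again controlled by $(2+R\beta\gamma_0)/(2+R\beta)$ since $-2(\alpha-\alpha_0)\le\beta R(\gamma_0-\gamma)$; by monotonicity it never exceeds the larger of these two endpoint values. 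Taking $\bar\alpha=(1-(\alpha-\alpha_0))\vee(2+R\beta\gamma_0)/(2+R\beta)$ then closes the estimate. I expect the main obstacle to be the bookkeeping in this last combination: one must verify that each affine ratio is monotone in the correct direction and that its extremal endpoint is dominated by $\bar\alpha$, which is exactly where the two hypotheses $\alpha_0<\alpha$ and $\gamma_0>\gamma+2K/R$ enter.
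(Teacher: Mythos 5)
Your proposal is correct and follows essentially the same route as the paper: the reduction to the two-point oscillation bound is exactly the content of the paper's Lemma~\ref{lem:reduction} (the seminorm $\Norm{\cdot}_\beta$ is precisely your oscillation quantity), and your two complementary estimates together with the case split on $V(x)+V(y)\ge R$ versus $V(x)+V(y)<R$ and the choice $\beta=\alpha_0/K$ coincide with the paper's proof in Section~3. The only cosmetic difference is that you close the final bookkeeping by monotonicity of ratios of affine functions in $s=V(x)+V(y)$, where the paper manipulates the corresponding inequalities directly.
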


\begin{remark}
The interest of this result lies in the fact that it is possible to tune $\beta$ in such a way that 
$\cP$ is a strict contraction for the distance $\dd_\beta$. In general, this does \textit{not} imply
that  $\cP$ is a contraction for $\dd_1$, say, even though the equivalence of the norms
$\norm{\cdot}_\beta$ does of course imply that there exists
$n>0$ such that $\cP^n$ is such a contraction.
\end{remark}

\section{Alternative formulation of metric $\dd_\beta$}

We now introduce an alternative definition of the weighted total
variation norm $\dd_\beta$. We begin by defining a metric $\d_\beta$ between points in
$\X$ by
\begin{align*}
  \d_\beta(x,y)=
  \begin{cases}
    0 & x=y\\
    2 + \beta V(x) + \beta V(y) & x \neq y
  \end{cases}
\end{align*}
Though sightly odd looking, the reader can readily verify that since $V\ge 0$, 
$d_\beta$ indeed satisfies the axioms of a metric.  This metric in turn
induces a Lipschitz seminorm on measurable functions and a metric on probability
measures defined respectively by
\begin{align*}
  \Norm{\phi}_{\beta} &= \sup_{x \neq y} \frac{|\phi(x)
    -\phi(y)|}{\d_\beta(x,y)}\;,\\
  \d_\beta(\mu_1,\mu_2) &= \sup_{\phi : \Norm{\phi}_{\beta} \leq 1} \int
  \phi(x) \bigl(\mu_1 - \mu_2\bigr)(dx)\;.
\end{align*}
It turns out that these norms are almost identical to the ones from the previous section.
More precisely, one has:
\begin{lemma}\label{lem:reduction}
One has the identity $\Norm{\phi}_\beta = \inf_{c \in \R} \norm{\phi + c}_\beta$.
In particular, $\d_\beta = \dd_\beta$.
\end{lemma}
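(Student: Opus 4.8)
The plan is to establish the scalar identity $\Norm{\phi}_\beta = \inf_{c\in\R}\norm{\phi+c}_\beta$ by proving two inequalities, and then to deduce the identity of metrics $\d_\beta = \dd_\beta$ on probability measures from it, exploiting the fact that $\mu_1 - \mu_2$ has zero total mass.

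For the inequality $\Norm{\phi}_\beta \le \inf_c \norm{\phi+c}_\beta$, I would fix an arbitrary $c\in\R$ and two distinct points $x\neq y$, and write $|\phi(x)-\phi(y)| = |(\phi(x)+c)-(\phi(y)+c)| \le |\phi(x)+c| + |\phi(y)+c|$. Bounding each summand by $\norm{\phi+c}_\beta(1+\beta V(x))$ and $\norm{\phi+c}_\beta(1+\beta V(y))$ respectively, the right-hand side becomes exactly $\norm{\phi+c}_\beta\,\d_\beta(x,y)$, since $\d_\beta(x,y) = 2+\beta V(x)+\beta V(y)$. Dividing by $\d_\beta(x,y)$ and taking the supremum over $x\neq y$ gives $\Norm{\phi}_\beta \le \norm{\phi+c}_\beta$, and then the infimum over $c$. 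This is where the slightly odd constant $2$ in the definition of $\d_\beta$ pays off: it is precisely the sum of the two $1$'s coming from the two weights $1+\beta V(x)$ and $1+\beta V(y)$.

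The reverse inequality is the crux. Writing $L=\Norm{\phi}_\beta$ and disposing of the trivial case $L=\infty$ via the first inequality, I would look for a single constant $c$ with $\norm{\phi+c}_\beta \le L$, i.e. $-L(1+\beta V(x)) \le \phi(x)+c \le L(1+\beta V(x))$ for every $x$. Such a $c$ exists if and only if $\sup_x[-L(1+\beta V(x)) - \phi(x)] \le \inf_y[L(1+\beta V(y)) - \phi(y)]$, which upon rearranging is exactly the family of inequalities $\phi(y)-\phi(x) \le L\,\d_\beta(x,y)$ for all $x,y$. This is nothing other than the defining bound $|\phi(x)-\phi(y)|\le L\,\d_\beta(x,y)$ of $L = \Norm{\phi}_\beta$, so the admissible interval for $c$ is nonempty and any such $c$ yields $\inf_c\norm{\phi+c}_\beta \le L$. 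The main thing to get right here is the bookkeeping of this sup/inf feasibility condition, together with checking that the degenerate case $x=y$ (where $\d_\beta=0$) causes no trouble, which holds because $L\ge 0$.

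Finally, to conclude $\d_\beta=\dd_\beta$, I would use that $\int_\X (\phi+c)\,(\mu_1-\mu_2)(dx) = \int_\X \phi\,(\mu_1-\mu_2)(dx)$ for every constant $c$, since $(\mu_1-\mu_2)(\X)=0$ for probability measures. Given the scalar identity, the unit balls of the two norms match up to additive constants: any $\phi$ with $\Norm{\phi}_\beta\le 1$ admits a $c$ with $\norm{\phi+c}_\beta\le 1$, and conversely $\norm{\phi}_\beta\le1$ forces $\Norm{\phi}_\beta\le1$ upon taking $c=0$. Since the pairing against $\mu_1-\mu_2$ is invariant under these constant shifts, the two suprema defining $\d_\beta$ and $\dd_\beta$ coincide, and I expect no obstacle here beyond recording this shift invariance.
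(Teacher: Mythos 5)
Your proof is correct and follows essentially the same route as the paper: the easy inequality via the triangle inequality, and for the converse the construction of an admissible shift $c$ from the infimum of $1+\beta V(\cdot)-\phi(\cdot)$ (your feasibility interval $\bigl[\sup_x(-L(1+\beta V(x))-\phi(x)),\,\inf_y(L(1+\beta V(y))-\phi(y))\bigr]$ is nonempty precisely because of the Lipschitz bound, and its right endpoint is the paper's choice of $c$ after normalizing $L=1$). The concluding step, that the two unit balls differ only by additive constants which are invisible to $\mu_1-\mu_2$, is also the paper's argument.
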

\begin{proof}
It is obvious that $\Norm{\phi}_\beta \le \norm{\phi}_\beta$ and therefore 
$\Norm{\phi}_\beta \le \inf_{c \in \R} \norm{\phi + c}_\beta$, so it remains to show
the reverse inequality.

  Given any $\phi$ with $\Norm{\phi}_{\beta} \leq 1$, we set $c =
  \inf_x \bigl(1 + \beta V(x) - \phi(x)\bigr)$. Observe that for any $x$ and $y$,
  $\phi(x) \leq |\phi(y)| + |\phi(x) - \phi(y)| \leq |\phi(y)| + 2
  +\beta V(x) +\beta V(y)$. Hence $1+ \beta V(x) - \phi(x) \geq -1
  -\beta V(y) - |\phi(y)|$. Since there exists at least one point with $V(y) <
  \infty$ we see that $c$ is bounded from below and hence $|c| <
  \infty$.

  Observe now that
  \begin{align*}
    \phi(x) +c \leq \phi(x) + 1+\beta V(x) - \phi(x) = 1+ \beta V(x)\;,
  \end{align*}
  and 
  \begin{align*}
    \phi(x) +c &=\inf_y \phi(x) + 1+\beta V(y) - \phi(y)\\
    &\geq \inf_y 1 + \beta V(y) - \Norm{\phi}_\beta\cdot \d_\beta(x,y)
    \geq - (1 + \beta V(x))\;,
  \end{align*}  
  so that $| \phi(x) + c| \leq 1+ \beta V(x)$ as required.
  
It follows that the sets $\{\phi: \norm{\phi}_\beta \le 1\}$ and
$\{\phi : \Norm{\phi}_\beta \le 1\}$ only 
differ by additive constants, so that one has indeed $\d_\beta = \dd_\beta$.
\end{proof}

\begin{remark}
Note that of course $\d_\beta = \dd_\beta$ only for probability measures, or at least positive 
measures of equal mass. Otherwise, $\d_\beta$ is $+\infty$ in general, while $\dd_\beta$ need not be.
\end{remark}

\section{Proof of main theorem}

\begin{theorem} If Assumptions~\ref{ass:Harris2} and
  \ref{ass:Harris1} hold there exists 
  an
 $\bar \alpha \in (0,1)$ and $\beta >0$ such that 
  \begin{align*}
\Norm{\cP\phi}_\beta \le \bar\alpha \Norm{\phi}_\beta\;.
  \end{align*}
Actually, setting $\gamma_0 = \gamma + 2K/R < 1$, for any $\alpha_0  \in
(0,\alpha)$  one can choose $\beta =\alpha_0/K$ and  $\bar \alpha=
(1- \alpha+\alpha_0) \vee (2+ R \beta\gamma_0)/(2+R \beta)$.
\end{theorem}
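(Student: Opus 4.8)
The plan is to prove the contraction estimate $\Norm{\cP\phi}_\beta \le \bar\alpha \Norm{\phi}_\beta$ by reducing it, via the definition of the Lipschitz seminorm $\Norm{\cdot}_\beta$, to a pointwise bound: it suffices to show that for every pair $x \neq y$ one has $|(\cP\phi)(x) - (\cP\phi)(y)| \le \bar\alpha\, \d_\beta(x,y)\,\Norm{\phi}_\beta$. By homogeneity I may normalize $\Norm{\phi}_\beta \le 1$, so that $|\phi(u)-\phi(v)| \le \d_\beta(u,v)$ for all $u,v$. The quantity to control is then $|(\cP\phi)(x)-(\cP\phi)(y)| = \bigl|\int \phi\,(\cP(x,\cdot)-\cP(y,\cdot))\bigr|$, and I want to bound it by $\bar\alpha(2 + \beta V(x) + \beta V(y))$ whenever $x \neq y$.

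\medskip
Let me carry out the argument. First I would dispose of the ``far'' case where not both $x,y$ lie in $\mathcal{C}$; say $V(x) + V(y) > R$ (equivalently $x$ or $y$ is outside the level set $\mathcal{C}$). Here I expect to use only the drift condition. Writing $(\cP\phi)(x) = \int \phi\,\cP(x,d\cdot)$ and subtracting off an optimal constant (using Lemma~\ref{lem:reduction} to replace $\Norm{\phi}_\beta$ by $\norm{\phi+c}_\beta \le 1$ for a suitable $c$), I can bound $|(\cP\phi)(x)-(\cP\phi)(y)|$ by $\int (1+\beta V)\,\cP(x,d\cdot) + \int(1+\beta V)\,\cP(y,d\cdot) = 2 + \beta(\cP V)(x) + \beta(\cP V)(y)$, and then apply \eref{e:Lyap} to get $\le 2 + 2\beta K + \beta\gamma(V(x)+V(y))$. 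I then want this to be at most $\bar\alpha(2+\beta V(x)+\beta V(y))$. Using $V(x)+V(y)>R$ I would absorb the additive $2\beta K$ term into $\beta\gamma_0 (V(x)+V(y))$ via $2K/R$, arriving at the factor $(2+R\beta\gamma_0)/(2+R\beta)$; this is exactly where the hypothesis $R > 2K/(1-\gamma)$ and the definition $\gamma_0 = \gamma + 2K/R < 1$ are used. The choice $\beta = \alpha_0/K$ does not yet matter here.

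\medskip
Second, the ``near'' case $x,y \in \mathcal{C}$, where I use the minorization from Assumption~\ref{ass:Harris1}. The idea is that since $\cP(x,\cdot) \ge \alpha\nu$ and $\cP(y,\cdot)\ge\alpha\nu$ on $\mathcal{C}$, the common mass $\alpha\nu$ cancels in the difference $\cP(x,\cdot)-\cP(y,\cdot)$: writing $\cP(x,\cdot) = \alpha\nu + (\cP(x,\cdot)-\alpha\nu)$ and similarly for $y$, the integral of $\phi$ against the difference equals the integral against the two \emph{reduced} sub-probability measures of mass $1-\alpha$. Bounding $\phi$ (after the constant subtraction) by $1+\beta V$ against each of these and again invoking the drift bound for the $V$-part, I expect to obtain a bound of the form $(1-\alpha)\cdot(\text{something}) + (\text{contribution from }\beta V)$. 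Tuning $\beta=\alpha_0/K$ small makes the $\beta V$ contributions negligible enough that the leading constant becomes $1-\alpha+\alpha_0 < 1$; this is the role of the first term in the maximum defining $\bar\alpha$.

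\medskip
The main obstacle I anticipate is bookkeeping the two estimates so that a \emph{single} pair $(\beta,\bar\alpha)$ works simultaneously in both regimes. The drift and minorization pull $\beta$ in opposite directions: the near case wants $\beta$ small (to suppress the $V$-terms that spoil the $1-\alpha$ contraction), while making $\beta$ small worsens the far-case ratio $(2+R\beta\gamma_0)/(2+R\beta)$, pushing it toward $1$. The resolution is precisely to take $\bar\alpha$ to be the maximum of the two regime-specific constants and verify that with $\beta=\alpha_0/K$ both stay strictly below $1$ — the far case staying below $1$ because $\gamma_0<1$, the near case because $\alpha_0<\alpha$. Checking these two strict inequalities and confirming the additive $2\beta K$ term is correctly absorbed will be the delicate part of the computation.
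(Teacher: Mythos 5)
Your proposal is correct and follows essentially the same route as the paper's own proof: the same reduction to a pointwise bound via Lemma~\ref{lem:reduction}, the same case split on $V(x)+V(y)$ versus $R$ with the $2\beta K$ term absorbed through $\gamma_0=\gamma+2K/R$ in the far case, and the same cancellation of the common $\alpha\nu$ component (the paper packages your ``reduced sub-probability measures'' as the normalized kernel $\tilde\cP = \frac{1}{1-\alpha}(\cP-\alpha\nu)$) in the near case, concluding with $\bar\alpha$ as the maximum of the two regime constants. Your identification of the tension in the choice of $\beta$ and its resolution matches the paper's own remark that $\gamma_1$ degrades as $\beta\to 0$ while the near-case constant requires $\beta=\alpha_0/K$.
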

\begin{proof} 
Fix a test function $\phi$ with $\Norm{\phi}_\beta \le 1$. 
By Lemma~\ref{lem:reduction}, we can assume without loss of generality that one
also has $\norm{\phi}_\beta \le 1$. The claim then follows if we can exhibit $\bar\alpha < 1$
so that 
\begin{equ}
  |\cP\phi(x) - \cP\phi(y)| \le \bar \alpha \d_\beta(x,y)\;.
\end{equ}

  If $x=y$, the claim is true. Henceforth we assume $x \neq y$.
  We begin by assuming that $x$ and $y$ are
  such that
  \begin{align}\label{outSide}
    V(x) + V(y) \geq R\ .
  \end{align}
Fixing $\gamma_0$ as in the statement of the theorem, for any $\beta>0$
  we set
  $\gamma_1=(2+ \beta R \gamma_0)/(2+\beta R)$. Observe that for $\beta \in
  (0,1)$ and $R>0$, one has $\gamma_1 \in (\gamma_0,1)$. With these
  choices, we have from \eref{e:Lyap} and \eref{e:normphi} the bound
\begin{align*}
  |\cP\phi(x) - \cP\phi(y)| & \leq 2 +\beta\cP V(x) + \beta\cP V(y)\\
&\leq  2 + \beta \gamma V(x) + \beta
  \gamma V(y) + 2 \beta K\\
  &\leq 2 +  \beta \gamma_0 V(x) + \beta
  \gamma_0 V(y) \\
  &\leq 2\gamma_1 +  \beta \gamma_1 V(x) + \beta
  \gamma_1 V(y) = \gamma_1 \d_\beta(x,y)\,.
\end{align*}
The third line follows from our choice of $\gamma_0$ and the fact that
by \eqref{outSide} we know that $2 K \leq (\gamma_0 -\gamma)(V(x) +
V(y))$.  The last line follows from the fact that $2(1-\gamma_1) =
\beta R (\gamma_1 -\gamma_0)\leq\beta  (\gamma_1 -\gamma_0)(V(x) +
V(y))$ given our choice of $\gamma_1$. We emphasise that up to now
$\beta$ could be any positive number; only the precise value of 
$\gamma_1$ depends on it (and gets ``worse'' for small values of $\beta$). 
The second part of the proof will determine 
a choice of $\beta >0$.

Now consider the case of $x$ and $y$ such that $V(x) + V(y) \leq R$
and hence $x,y \in \mathcal{C}$. For such $x$ and $y$ we define the
Markov transition $\tilde \cP$ by $\tilde \cP(x,\ccdot)
=\frac{1}{1-\alpha} \cP(x,\ccdot) -\frac{\alpha}{1-\alpha}
\nu(\ccdot)$. Now we have $\cP \phi(x) = (1-\alpha) \tilde \cP \phi(x)
+ \alpha \int \phi d\nu$ and $\cP \phi(y) = (1-\alpha) \tilde \cP
\phi(y) + \alpha \int \phi d\nu$. Subtracting the second of these
expressions from the first and using that since $V$ is a non-negative
function $\tilde \cP V(x) \leq \frac{1}{1-\alpha} \cP V(x)$ produces
\begin{align*}
 |\cP \phi(x) - \cP \phi(y)| &=    (1-\alpha) \big| \tilde \cP
 \phi(x) - \tilde \cP \phi(y) \big| \\
&\leq (1-\alpha) 2 + (1-\alpha)\beta( \tilde \cP V(x)  +\tilde \cP
V(y))\\
&\leq (1-\alpha) 2 + \beta(\cP V(x)  +\cP
V(y))\\
& \leq (1-\alpha)2  + \gamma \beta V(x) + \gamma \beta V(y) + 2\beta
K\;.
\end{align*}
Hence fixing $\beta=\alpha_0/K$ for any $\alpha_0  \in
(0,\alpha)$ and setting and $\gamma_2=
(1-(\alpha-\alpha_0))\vee \gamma\in (0,1)$ produces 
\begin{align*}
 |\cP \phi(x) - \cP \phi(y)| 
&\leq 2(1-(\alpha-\alpha_0)) +  \gamma \beta V(x) +  \gamma \beta V(y)\\
&\leq \gamma_2 \d_\beta(x,y)\,.
\end{align*}
Setting $\bar \alpha = \gamma_1 \vee \gamma_2$ and recalling that
$\gamma_1 \geq \gamma$ concludes the proof.
\end{proof}

Theorem~\ref{thm:main} now follows as a corollary since $\d_\beta = \dd_\beta$
and $\d_\beta$ is the norm dual to $\Norm{\cdot}_\beta$. In order to conclude that Theorem~\ref{theo:Harris}
holds, it only remains to show that our assumptions imply that an invariant measure $\mu_\star$ actually exists and
that the integral of $V$ with respect to $\mu_\star$ is finite.

\subsection{Existence of an invariant measure}

We have already shown that  Assumptions~\ref{ass:Harris2} and
\ref{ass:Harris1} allow to prove that for some $\beta>0$, $\cP$ is a strict contraction in the 
weighted total variation metric $\dd_\beta$  defined by \eqref{eq:dd_def}.
We now show that the same assumptions are also sufficient to ensure the existence of an invariant measure:
\begin{theorem}
   If Assumptions~\ref{ass:Harris2} and
 \ref{ass:Harris1} hold then there exists a probability measure $\mu_\infty$ on $\X$
 such that $\int V\,d\mu_\infty < \infty$ and which is invariant in that $\cP \mu_\infty = \mu_\infty$.
\end{theorem}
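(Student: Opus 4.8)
The plan is to construct the invariant measure as a limit point of the Cesàro averages (or directly of the iterates) of $\cP$ applied to a fixed starting measure, and then verify invariance and integrability of $V$ using the drift condition. First I would fix an arbitrary starting point $x_0 \in \X$ (using that $\{x : V(x) < \infty\}$ is nonempty, as noted in the remark after Assumption~\ref{ass:Harris2}) and consider the sequence of probability measures $\mu_n = \cP^n \delta_{x_0}$. By iterating the Lyapunov bound \eref{e:Lyap}, one gets $(\cP^n V)(x_0) \le \gamma^n V(x_0) + K/(1-\gamma)$, so that $\sup_n \int V\,d\mu_n \le V(x_0) + K/(1-\gamma) =: M < \infty$. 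This uniform bound on $\int V\,d\mu_n$ is the key quantitative input and is what will ultimately give $\int V\,d\mu_\infty < \infty$.

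Next, I would show that the sequence $\mu_n$ is Cauchy in the metric $\dd_\beta$. Since $\cP$ is a strict contraction with rate $\bar\alpha$ in $\dd_\beta$ (by Theorem~\ref{thm:main}), one has $\dd_\beta(\mu_{n+1}, \mu_n) \le \bar\alpha^n \dd_\beta(\mu_1, \mu_0)$, and $\dd_\beta(\mu_1,\mu_0)$ is finite because both measures have finite $V$-integral (indeed $\dd_\beta(\mu_1,\mu_0) \le 2 + \beta\int V\,d\mu_1 + \beta\,V(x_0) < \infty$ using the total-variation representation of $\dd_\beta$). Summing the geometric series shows $(\mu_n)$ is Cauchy, so if the space of probability measures with finite $V$-integral is complete under $\dd_\beta$, it converges to some limit $\mu_\infty$. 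I would then argue invariance: from $\dd_\beta(\cP\mu_\infty, \mu_\infty) \le \dd_\beta(\cP\mu_\infty, \mu_{n+1}) + \dd_\beta(\mu_{n+1}, \mu_\infty) = \dd_\beta(\cP\mu_\infty, \cP\mu_n) + \dd_\beta(\mu_{n+1},\mu_\infty) \le \bar\alpha\,\dd_\beta(\mu_\infty,\mu_n) + \dd_\beta(\mu_{n+1},\mu_\infty)$, both terms vanish as $n\to\infty$, forcing $\cP\mu_\infty = \mu_\infty$.

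The main obstacle is establishing \emph{completeness} of the relevant metric space under $\dd_\beta$, since $\dd_\beta$ dominates the total-variation norm and a priori a $\dd_\beta$-Cauchy sequence of measures could lose mass or fail to converge to a genuine probability measure. The cleanest route is to note that $\dd_\beta$-convergence implies total-variation convergence (because $\dd_\beta(\mu_1,\mu_2) \ge |\mu_1-\mu_2|(\X)$, taking $\phi \equiv 1$ which has $\norm{\phi}_\beta \le 1$), and the space of finite signed measures is complete in total variation; the limit $\mu_\infty$ is then automatically a probability measure since total mass is preserved along the sequence. Finally, to upgrade to $\int V\,d\mu_\infty < \infty$, I would use lower semicontinuity: for each $N$, the truncation $V \wedge N$ is bounded, so $\int (V\wedge N)\,d\mu_\infty = \lim_n \int (V\wedge N)\,d\mu_n \le M$ by total-variation convergence, and then monotone convergence in $N$ yields $\int V\,d\mu_\infty \le M < \infty$. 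This last truncation argument is the technically delicate point, since $V$ is unbounded and one cannot pass to the limit directly inside $\int V\,d\mu_n$.
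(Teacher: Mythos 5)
Your proposal is correct and follows essentially the same route as the paper: take $\mu_n=\cP^n\delta_{x_0}$, use the $\dd_\beta$-contraction from Theorem~\ref{thm:main} to see that the sequence is Cauchy, invoke completeness (via the total variation distance) to extract a limit, and pass to the limit in $\cP\mu_n=\mu_{n+1}$. In fact you supply more detail than the paper on the two points it asserts parenthetically --- the truncation argument giving $\int V\,d\mu_\infty<\infty$ and the identification of the limit as a probability measure --- the only cosmetic difference being that the paper verifies invariance using that $\cP$ is nonexpansive in plain total variation, whereas your triangle-inequality argument uses the $\dd_\beta$-contraction and hence implicitly needs $\dd_\beta(\mu_n,\mu_\infty)\to 0$, which follows from the same truncation trick applied to the Cauchy estimate.
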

\begin{proof} Fixing any $x \in \X$, for $n \in \N$\ define
  $\mu_n=\cP^n \delta_x$. By Theorem~\ref{thm:main}, we know that for
  some $\bar\alpha \in (0,1)$ and some $\beta>0$,
  \begin{align*}
    \dd_\beta(\mu_{n+1},\mu_{n})\leq \alpha^n
    \dd_\beta(\mu_{1},\delta_x)\ .
  \end{align*}
  Hence, $\mu_{n}$ is a Cauchy sequence. Since $\dd_\beta$ is complete for
  the space of probability measures integrating $V$ (because the total variation distance is complete for 
  the space of measures with finite mass) there
  exists a probability measure $\mu_\infty$ so that $\dd_\beta(\mu_{n},\mu_\infty)
  \rightarrow 0$ as $n \rightarrow \infty$. Since this implies that $\mu_n \to \mu_\infty$ in total variation
  and $\cP$ is always a contraction in the
  total variation distance, it follows that $\cP \mu_\infty =\lim \cP \mu_n = \lim \mu_{n+1} =
  \mu_\infty$ as required.
\end{proof}
\subsection{A slightly different set of assumptions}

Many results in the theory of Harris chains results are proved under
a slightly different set of assumptions. The Lyapunov function
condition in Assumption~\ref{ass:Harris2} is replaced with the following:
\begin{assumption}\label{ass:Harris2p}
  There exists a function $V:\X \rightarrow [1,\infty)$ and constants $b\geq 0$,
  $\tilde \gamma \in (0,1)$ and a subset $S \subset \X$ such that
 \begin{equ}[e:Lyap2]
   (\cP V)(x) \leq \tilde \gamma V(x) + b \one_S(x)\;,
 \end{equ}
 for all $x \in \X$.
\end{assumption}

Clearly Assumption~\ref{ass:Harris2p} implies
Assumption~\ref{ass:Harris2} with $K=b$. The question is whether
Assumption~\ref{ass:Harris1} holds with that choice of $K$ and with
$\mathcal{C}$  defined as in Assumption~\ref{ass:Harris1}.  If it does
then our main theorem holds. However, Assumption~\ref{ass:Harris2p}
is most naturally paired with the following modified version of
Assumption~\ref{ass:Harris1}.
\begin{assumption}\label{ass:Harris1p}
  There exists a constant $\tilde \alpha
  \in(0,1]$ and a probability measure $\tilde \nu$ so that the lower bound
  \begin{align*}
    \inf_{x \in S} \cP(x,\ccdot) \geq \tilde\alpha \tilde\nu(\ccdot)
  \end{align*}
holds. Here, the set $S$ is the same as in Assumption~\ref{ass:Harris2p}.
\end{assumption}

It is relatively clear that Assumptions~\ref{ass:Harris2} and
\ref{ass:Harris1} together imply Assumptions~\ref{ass:Harris2p} and
\ref{ass:Harris1p}. In particular, if one picks a $\bar \gamma \in
(\gamma,1)$ sufficiently close to one, then $R \geq K/(\bar \gamma-
\gamma)$ and setting $S= \{ x: V(x) \leq K\}$ we see that the desired
implication holds.

\begin{remark}
  In general, one cannot hope for Assumptions~\ref{ass:Harris1p} and
  \ref{ass:Harris2p} to imply Assumptions~\ref{ass:Harris2} and
  \ref{ass:Harris1} and hence the existence of a spectral gap without
  any further assumptions. A trivial example is given by $\X = \{0,1\}$
  with the (deterministic) transition probabilities $\cP(x,\cdot) =
  \delta_{1-x}$. This Markov operator has spectrum $\{-1,1\}$ and has
  therefore no spectral gap. On the other hand,
  Assumptions~\ref{ass:Harris1p} and \ref{ass:Harris2p} are satisfied
  with $\tilde \alpha = 1$, $\tilde \gamma = 1/2$, and $b = 3/2$ if one makes for
  example the choice $S = \{0\}$, $\tilde \nu = \delta_1$,  and $V(x) = 1+ x$.
\end{remark}

In spite of the preceding remark, we are now going to show that
Assumptions~\ref{ass:Harris1p} and \ref{ass:Harris2p} are essentially
equivalent to Assumptions~\ref{ass:Harris2} and \ref{ass:Harris1} from
the previous section. More precisely, for $N>0$,
define the ``averaged'' Markov operator
\begin{equ}
\cQ = {1 \over N+1}\sum_{k=0}^N \cP^k\;.
\end{equ}
Then we have:
\begin{theorem}
If $\cP$ satisfies Assumptions~\ref{ass:Harris1p} and \ref{ass:Harris2p}, then there exists a choice of $N$ such that
$\cQ$ satisfies Assumptions~\ref{ass:Harris2} and \ref{ass:Harris1}.
\end{theorem}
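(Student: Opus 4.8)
The plan is to verify the two conditions of the previous sections directly for $\cQ$ (which is again a Markov kernel, being an average of the $\cP^k$), keeping the same Lyapunov function $V$ -- it maps into $[1,\infty)\subset[0,\infty)$, so it is admissible for Assumption~\ref{ass:Harris2} -- and fixing the averaging length $N$ together with the level-set radius $R$ only at the very end, once all the threshold inequalities are visible.

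First I would establish the drift bound for $\cQ$. Iterating \eref{e:Lyap2} and using that each $\cP^k$ is positive and monotone with $\cP^k\one_S\le 1$ gives $(\cP^kV)(x)\le\tilde\gamma^k V(x)+b/(1-\tilde\gamma)$. Averaging over $k=0,\dots,N$ yields
\[
  (\cQ V)(x)\le\gamma_N V(x)+K,\qquad \gamma_N=\frac{1}{N+1}\sum_{k=0}^N\tilde\gamma^k=\frac{1-\tilde\gamma^{N+1}}{(N+1)(1-\tilde\gamma)},\qquad K=\frac{b}{1-\tilde\gamma}.
\]
Since $\gamma_N$ is an average of $1,\tilde\gamma,\dots,\tilde\gamma^N$, it is decreasing in $N$ and tends to $0$; in particular $\gamma_N\in(0,1)$ for all $N\ge1$, and crucially $K$ is independent of $N$. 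This gives Assumption~\ref{ass:Harris2} for $\cQ$ with constants $(\gamma_N,K)$.

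The substantial step is the minorization, where the two hypotheses must genuinely be combined. Assumption~\ref{ass:Harris1p} lifts to the iterates as $\cP^{k+1}(x,\ccdot)=\int_\X\cP(y,\ccdot)\,\cP^k(x,dy)\ge\tilde\alpha\tilde\nu(\ccdot)\,\cP^k(x,S)$, so summing and discarding the $k=0$ term gives
\[
  \cQ(x,\ccdot)\ge\frac{\tilde\alpha}{N+1}\,\tilde\nu(\ccdot)\sum_{k=0}^{N-1}\cP^k(x,S).
\]
The main obstacle is then to bound the occupation sum $\sum_{k=0}^{N-1}\cP^k(x,S)$ from below, uniformly over a level set of $V$; this is precisely where the drift does its work. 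Writing $v_k=(\cP^kV)(x)$ and $s_k=\cP^k(x,S)$, \eref{e:Lyap2} reads $v_{k+1}\le\tilde\gamma v_k+b s_k$, hence $(1-\tilde\gamma)v_k\le v_k-v_{k+1}+b s_k$; summing over $k=0,\dots,N-1$, telescoping, dropping $v_N\ge0$ and using $v_k\ge1$ (because $V\ge1$, which also forces $b>0$) produces $(1-\tilde\gamma)N\le V(x)+b\sum_{k=0}^{N-1}s_k$, that is
\[
  \sum_{k=0}^{N-1}\cP^k(x,S)\ge\frac{(1-\tilde\gamma)N-V(x)}{b}.
\]
Combining the two displays, for $x$ in $\mathcal{C}=\{V\le R\}$ one gets $\cQ(x,\ccdot)\ge\alpha'\tilde\nu(\ccdot)$ with $\alpha'=\tilde\alpha\bigl((1-\tilde\gamma)N-R\bigr)/\bigl(b(N+1)\bigr)$, which is the minorization of Assumption~\ref{ass:Harris1} with measure $\tilde\nu$.

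It remains to choose the constants so that every threshold holds simultaneously, which is routine bookkeeping. First fix any $R>2K=2b/(1-\tilde\gamma)$. Because $\gamma_N\downarrow0$, the quantity $2K/(1-\gamma_N)$ decreases to $2K$, so for all $N$ large enough one has $R>2K/(1-\gamma_N)$, the radius condition demanded by Assumption~\ref{ass:Harris1} for the drift constants $(\gamma_N,K)$. Enlarging $N$ further so that $N>R/(1-\tilde\gamma)$ makes $\alpha'>0$, while $\alpha'\le1$ holds automatically (evaluate the minorization at $A=\X$); thus, after replacing $\alpha'$ by $\alpha'\wedge\tfrac12$ if one insists on strict inequality, any such $N$ exhibits $\cQ$ as satisfying Assumptions~\ref{ass:Harris2} and \ref{ass:Harris1}.
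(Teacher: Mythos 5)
Your proof is correct, and although it follows the same overall strategy as the paper --- use the drift condition to bound from below the expected occupation of $S$ during the first $N$ steps of the chain started in a level set of $V$, then push this through the minorization on $S$ --- your execution of the minorization half is genuinely more elementary. The paper first shows $\int V\,d\tilde\nu<\infty$, finds $\ell$ with $(\cP^{\ell-1}\tilde\nu)(S)>0$, builds the averaged measure $\nu=\frac{1}{\ell}\sum_{k=0}^{\ell-1}\cP^k\tilde\nu$ satisfying $\cP\nu\ge\hat\alpha\nu$, and only then minorizes $\cQ$ by this $\nu$; you observe instead that a single application of Assumption~\ref{ass:Harris1p} converts each visit to $S$ at time $k$ into a contribution $\tilde\alpha\,\cP^k(x,S)\,\tilde\nu(\cdot)$ to $\cP^{k+1}(x,\cdot)$, so $\cQ$ is minorized directly by $\tilde\nu$ itself and the whole $\ell$-construction (and the finiteness of $\int V\,d\tilde\nu$) is bypassed. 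Your occupation bound is also derived differently: the paper iterates the drift to get $\sum_{k\le n}\tilde\gamma^k\cP^{n-k}(x,S)\ge 1/(2b)$ on the set $\{V\le\tilde\gamma^{-n-1}/2\}$, whereas your telescoping of $(1-\tilde\gamma)v_k\le v_k-v_{k+1}+b s_k$ gives the linear-in-$N$ bound $\sum_{k<N}\cP^k(x,S)\ge\bigl((1-\tilde\gamma)N-V(x)\bigr)/b$, which leads to fully explicit constants $\gamma_N$, $K=b/(1-\tilde\gamma)$ and $\alpha'$, and to a cleaner choice of $N$. The only points to make explicit in a final write-up are the ones you already flag in passing: $b>0$ and $S\ne\emptyset$ (both forced by $V\ge 1$), and capping the minorization constant at $\alpha'\wedge\tfrac12$ so that it lies strictly in $(0,1)$.
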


\begin{proof}
Fix some arbitrary $R$ with
$R > 2b/(1-\tilde\gamma)$. Our aim is to show that we can find $N>0$, a probability measure $\nu$ and a constant
$\alpha > 0$ such that $\cQ(x,\cdot) \ge \alpha \nu(\cdot)$ for every $x$ with $V(x) \le R$.

Iterating \eref{e:Lyap2}, we find that one has the bound
\begin{equ}[e:lowerboundPS]
1 \le \cP^{n+1} V \le \tilde \gamma^{n+1} V + b \sum_{k=0}^n \tilde \gamma^k \cP^{n-k} \one_S\;,
\end{equ}
so that, on the set $\cS_n = \{x\,:\, V(x) \le \tilde\gamma^{-n-1}/2\}$, one has the lower bound
\begin{equ}[e:lowerBound2]
\inf_{x \in \cS_n} \sum_{k=0}^n \tilde \gamma^k \cP^{n-k}(x, S) \ge {1\over 2b}\;.
\end{equ}
In particular this implies that for every $x \in \X$, there exists $n$ such that $\cP^n(x,S) > 0$. Combining this with
our two assumptions shows that $\int V(x)\tilde\nu(dx) = C < \infty$ so that, integrating \eref{e:lowerboundPS} with respect to $\tilde\nu$,
we obtain
\begin{equ}
1 \le C \tilde \gamma^{n+1} + b \sum_{k=0}^n \tilde \gamma^k \bigl(\cP^{n-k}\tilde\nu\bigr)(S)\;.
\end{equ}
Choosing $n$ sufficiently large then implies the existence of some $\ell > 0$ such that $\bigl(\cP^{\ell-1}\tilde\nu\bigr)(S) > 0$.
Combining this with Assumption~\ref{ass:Harris2p} shows that there exists $\hat \alpha > 0$ such that $\cP^{\ell}\tilde\nu \ge \hat \alpha \tilde \nu$.
Setting now $\nu = {1\over \ell} \sum_{k=0}^{\ell-1} \cP^{k}\tilde \nu$, it follows that one has the bound
\begin{equ}
\cP \nu = {1\over \ell} \sum_{k=1}^{\ell-1} \cP^{k}\tilde \nu + {1\over \ell} \cP^{\ell}\tilde \nu
\ge {1\over \ell} \sum_{k=1}^{\ell-1} \cP^{k}\tilde \nu + {\hat \alpha \over \ell} \tilde \nu \ge \hat \alpha \nu\;.
\end{equ}
In particular, this implies that for every $m \ge 1$ there exists a constant $\alpha_m$ such that the lower bound
\begin{equ}[e:lowerBound3]
\inf_{x \in S} \sum_{k=m}^{m+\ell} \cP^k(x, \cdot) \ge \alpha_m \nu(\cdot)
\end{equ}
holds.  Let now $n$ be sufficiently large such that $\tilde\gamma^{-n-1}/2 \ge R$ and set $N = n+1+\ell$.
Combining \eref{e:lowerBound2} and \eref{e:lowerBound3} then yields the desired result.
\end{proof}

\begin{remark}
Keeping track of the constants appearing in the proof of the previous result, we see that one can choose for example
any integer $N$ such that
\begin{equ}
N > 1 + \log \Bigl({2b \over 1-\tilde\gamma} \int V(x)\,\tilde \nu(dx)\Bigr)/\log \tilde\gamma\;.
\end{equ}
\end{remark}

\subsection*{Acknowledgment}
MH acknowledges the support of an EPSRC Advanced Research Fellowship
(grant number EP/D071593/1) and JCM the support of NSF through a
PECASE award (DMS-0449910) and the support of the Sloan foundation
through a Sloan foundation fellowship.

\bibliographystyle{Martin}
\bibliography{./refs}

\end{document}